\documentclass[runningheads]{llncs}

\usepackage[T1]{fontenc}
\usepackage{afterpage}
\usepackage{graphicx}
\usepackage{tikz}
\usetikzlibrary{calc,arrows.meta}
\usepackage{subcaption}
\usepackage[section]{placeins}

\usepackage{amsmath,amssymb,amsfonts,mathtools}
\usepackage{algorithm}
\usepackage{algpseudocode}

\newcommand{\wrap}{\operatorname{wrap}_\pi}

\begin{document}
\title{Topology, noise, and parallel updates in a model of circular opinion dynamics}
\titlerunning{Order parameters for opinion dynamics}
%
\author{
Wioletta M. Ruszel\textsuperscript{1}
\and
Cristian Spitoni\textsuperscript{2}
}
\authorrunning{W.M. Ruszel and C. Spitoni}
\institute{
Mathematical Institute, Utrecht University, Utrecht, The Netherlands
}

\maketitle

\begingroup
\renewcommand\thefootnote{\arabic{footnote}}
\footnotetext[1]{\texttt{w.m.ruszel@uu.nl}}
\footnotetext[2]{\texttt{c.spitoni@uu.nl}}
\endgroup

\begin{abstract}
We study a circular opinion dynamics model with local midpoint interactions, extended to allow parallel updates of multiple sites. 
On a ring, the dynamics admits twisted states associated with integer winding numbers. 
We investigate how bi-modal noise, which drives opinions toward two antipodal directions, affects these configurations. 
Numerically, we find that noise both destabilizes winding states and induces a spin--flop regime, characterized by macroscopic switching between preferred orientations. 
We introduce order parameters that distinguish topological trapping from symmetry breaking, providing a simple macroscopic description of the dynamics.
\end{abstract}
\section{Introduction}
Opinion dynamics and synchronization models are often studied through
minimal interaction rules that isolate the mechanisms responsible for
collective behavior \cite{Lorenz2007}. While classical opinion models
typically assume opinions lie on a linear scale, many systems involve
variables that are intrinsically periodic, including orientations,
phases, cyclic preferences, or states defined modulo $2\pi$. In such
settings, circular state spaces arise naturally and introduce
topological features that have no analogue in Euclidean opinion
dynamics. Similar circular variables play a central role in the
Kuramoto model and related synchronization frameworks, where winding
numbers and twisted states organize the collective dynamics
\cite{Acebron2005,Strogatz2000}. The present work adopts this
perspective and studies a minimal circular averaging process designed
to isolate how topology, local compromise, stochastic perturbations,
and synchrony interact. The path and ring are chosen as the simplest
interaction topologies that allow one to compare systems with and
without topological winding. Likewise, the midpoint rule is the
natural circular analogue of arithmetic averaging in classical
compromise models, as it moves interacting agents toward the midpoint
of the shortest arc connecting their opinions. Finally, the bi-modal
perturbation can be interpreted as a stylized representation of
competing external influences favoring two opposite orientations. Our
goal is therefore not to model a specific empirical social process,
but rather to understand the dynamical consequences of these
ingredients in the simplest setting where they can be analyzed
systematically.

In a recent companion work \cite{Brockhaus2026}, we introduced the \texttt{ACCA} (Asynchronous Continuous-state Cellular Automaton) midpoint dynamics as a mathematically tractable model of circular opinion formation, designed to isolate how nonlinear geometry and graph topology interact in local averaging processes. In this framework, neighboring agents repeatedly move to the midpoint along the shortest arc on the circle, producing a dynamics that is locally contractive but globally constrained by the topology of the interaction graph. We established a rigorous theoretical foundation for this model by proving that, for every finite system size, the process converges almost surely to consensus under both \emph{open} and \emph{periodic} boundary conditions. More importantly, however, we showed that \emph{periodic} boundary conditions fundamentally reorganize the transient dynamics through the emergence of topological winding sectors. In the ring geometry, wrapped local opinion differences define an integer-valued winding number, which partitions the configuration space into distinct sectors corresponding to different global topological profiles. We proved that this winding number can change only through explicitly characterizable branch-crossing events, thereby identifying the unique mechanism by which the system may escape from one topological sector to another. Starting from generic disordered initial conditions, the local midpoint rule rapidly suppresses large angular discrepancies, causing branch-crossings to become increasingly rare and effectively freezing the system for long time intervals within fixed winding sectors. Within these sectors, we developed a lifted representation, allowing us to rigorously transform the nonlinear circular dynamics into an exact Euclidean midpoint process. This construction yielded strict contraction results and provided a mathematical explanation for the emergence of long-lived twisted transient configurations: while these winding profiles are not true equilibria, the system is dynamically trapped near them for extended times before rare fluctuations eventually drive relaxation toward consensus. In this way, the previous article established a minimal and fully rigorous mechanism through which circular geometry alone induces \emph{metastability}, topological trapping, and slow relaxation, despite consensus remaining the only absorbing state. 

The present work builds directly on this mathematical framework by investigating how these previously characterized trapping mechanisms are altered when additional ingredients, namely \emph{bi-modal} stochastic perturbations and partial \emph{parallelization}, are introduced. Rather than re-establishing the deterministic topological structure, our goal here is to understand how noise and synchrony destabilize winding sectors, compete with midpoint contraction, and generate qualitatively new macroscopic phenomena such as symmetry breaking, winding destabilization, and \emph{spin–flop} collective dynamics.

We consider indeed  a noisy extension of the \texttt{ACCA} dynamics in which opinions are occasionally pushed toward two antipodal directions. These bi-modal perturbations generate a new collective phenomenon: the system alternates between two coherent macroscopic orientations, producing long-lived \emph{spin--flop} transitions \cite{collet2016s}.

In addition, we introduce a generalized version of the dynamics with \emph{parallel updates}, in which multiple disjoint interactions and stochastic perturbations act simultaneously on subsets of agents. This extension allows us to interpolate between asynchronous and partially synchronous regimes, and to investigate how the degree of parallelism affects the formation and stability of coherent structures.

Our aim is to characterize these regimes through suitable macroscopic observables. We introduce the models in Section \ref{sec:model} and the order parameters that distinguish winding trapping from antipodal switching in Section \ref{sec:order}. They will be used in Section \ref{sec:results} in the simulations to study the interplay between topological constraints, midpoint averaging, bi-modal noise, and parallel updates. Finally, we discuss future developments in Section \ref{sec:discussion}.
\section{Model and notation}
\label{sec:model}

This section uses the notation introduced in \cite{Brockhaus2026}. We consider a system of $N$ agents located on the vertices
$V_N:=\{1,\dots,N\}$ of a one–dimensional graph with edge set $\mathcal{E}$.
Each agent carries an opinion represented by an angle on the circle,
so that the configuration at time $t$ is $\theta_t=(\theta_t(1),\dots,\theta_t(N))\in\Omega_N:=[-\pi,\pi)^N$. Throughout the paper angles are interpreted modulo $2\pi$.
For $x\in\mathbb R$, we define the {\it wrapped difference}
\begin{equation}
\label{e:wrap}
\wrap(x):=x-2\pi\Big\lfloor\frac{x+\pi}{2\pi}\Big\rfloor
\in [-\pi,\pi).
\end{equation}
For two opinions $\alpha,\beta$ we write $\wrap(\beta-\alpha)$
for their shortest circular difference.

We consider two possible interaction graphs: the \emph{path} with edge set $\mathcal E_\emptyset=\{(i,i+1):1\le i\le N-1\}$, and the \emph{ring} with periodic boundary conditions
$\mathcal E_p=\{(i,i+1):1\le i\le N-1\}\cup\{(N,1)\}$. Hence, in this paper $\mathcal{E}\in\{\mathcal{E}_\emptyset,\mathcal{E}_p\}$. For an edge $e=(i,j)$ we define the {\it circular increment}
\begin{equation}
\label{eq:delta}
\delta_t(e):=\wrap\big(\theta_t(j)-\theta_t(i)\big).
\end{equation}
We define the winding number, which will be an integer on the ring,  in the following way:
\begin{equation}
\label{e:winding}
W_t:=\frac1{2\pi}\sum_{e\in\mathcal E_p}\delta_t(e).
\end{equation}

\subsection{\texttt{ACCA} midpoint dynamics}

We briefly summarise the \texttt{ACCA} (\emph{Asynchronous Continuous-space Cellular Automaton}) midpoint rule introduced in \cite{Brockhaus2026}. 
At each discrete time step an edge is selected uniformly at random and
the two corresponding opinions move to their circular midpoint along the shortest arc.

\begin{algorithm}[H]
\caption{\texttt{ACCA} midpoint dynamics}
\label{alg:acca}
\begin{algorithmic}[1]
\State Choose an edge $e=(i,j)$ uniformly at random from $\mathcal E$
\State $\delta \gets \wrap(\theta_t(j)-\theta_t(i))$
\State $\theta_{t+1}(i) \gets \wrap(\theta_t(i)+\delta/2)$
\State $\theta_{t+1}(j) \gets \wrap(\theta_t(j)-\delta/2)$
\State All other sites remain unchanged
\end{algorithmic}
\end{algorithm}

Thus, the selected pair is replaced by its midpoint along the shortest
arc of the circle.
This local averaging mechanism decreases disagreement and drives the
system toward consensus \cite{gantert2020}. We refer the interested reader to \cite{Brockhaus2026} for further properties of the dynamics.

\subsection{\texttt{ACCA} dynamics with \emph{bi-modal} noise}

To investigate the robustness of the dynamics, we introduce a noisy
variant in which opinions are occasionally perturbed toward two
\emph{antipodal} directions.

\begin{algorithm}[H]
\caption{\texttt{ACCA} dynamics with bi-modal noise}
\label{alg:noise}
\begin{algorithmic}[1]
\State Apply \texttt{Algorithm}~1 to configuration $\theta_t$ and obtain the configuration $\theta^\star_t$
\State Choose site $k\in V_N$ uniformly from $\{1,\dots,N\}$
\State Choose $a\in\{0,\pi\}$ with probability $1/2$
\State $\theta_{t+1}(k)\gets \wrap\left(
\theta^\star_t(k)+\varepsilon\,\wrap(a-\theta^\star_t(k))\right)$
\end{algorithmic}
\end{algorithm}

The perturbation moves the chosen opinion a fraction $\varepsilon$ of the
shortest circular distance toward the target angle $a$.
For $\varepsilon=0$ the dynamics reduces to the non-noisy \texttt{ACCA}
model, while for $\varepsilon>0$ the system experiences persistent
symmetric forcing toward the two antipodal directions $0$ and $\pi$ (see Figure~\ref{fig:acca_noise}).
\begin{figure}[tbp]
\centering
\begin{subfigure}[t]{0.48\textwidth}
\centering
\begin{tikzpicture}[scale=0.95, transform shape,
    every node/.style={font=\scriptsize}]
    \draw[thick] (0,0) circle (1);
    \fill (0,0) circle (0.02);

    \def\a{25}
    \def\b{115}
    \pgfmathsetmacro{\m}{(\a+\b)/2}

    \coordinate (A) at (\a:1);
    \coordinate (B) at (\b:1);
    \coordinate (M) at (\m:1);

    \draw[dashed] (0,0) -- (A);
    \draw[dashed] (0,0) -- (B);
    \draw[dashed] (0,0) -- (M);

    \draw[thick, blue] (\a:1) arc[start angle=\a,end angle=\b,radius=1];

    \fill (A) circle (0.04);
    \fill (B) circle (0.04);
    \fill[blue] (M) circle (0.05);

    \node[right] at ($(A)+(0.08,-0.02)$) {$\theta_t(i)$};
    \node[left]  at ($(B)+(-0.08,0)$) {$\theta_t(j)$};
    \node[above] at ($(M)+(0,0.10)$) {$\theta_{t}^{*}(i)=\theta_{t}^{*}(j)$};

    \node at (0,-1.30) {midpoint step};
\end{tikzpicture}
\caption{}
\label{fig:acca_noise_a}
\end{subfigure}
\hfill
\begin{subfigure}[t]{0.48\textwidth}
\centering
\begin{tikzpicture}[scale=0.95, transform shape,
    every node/.style={font=\scriptsize}]
    \draw[thick] (0,0) circle (1);
    \fill (0,0) circle (0.02);

    \def\th{70}
    \def\targetA{0}
    \def\targetB{180}
    \def\eps{0.35}

    \pgfmathsetmacro{\wrapA}{mod(\targetA-\th+180,360)-180}
    \pgfmathsetmacro{\wrapB}{mod(\targetB-\th+180,360)-180}

    \pgfmathsetmacro{\newA}{\th + \eps*\wrapA}
    \pgfmathsetmacro{\newB}{\th + \eps*\wrapB}

    \coordinate (X)   at (\th:1);
    \coordinate (T0)  at (\targetA:1);
    \coordinate (Tpi) at (\targetB:1);
    \coordinate (XA)  at (\newA:1);
    \coordinate (XB)  at (\newB:1);

    \draw[dashed] (0,0) -- (X);
    \draw[dashed] (0,0) -- (T0);
    \draw[dashed] (0,0) -- (Tpi);

    \fill[gray] (T0) circle (0.035);
    \fill[gray] (Tpi) circle (0.035);

    \fill (X) circle (0.045);

    \draw[->, thick, blue]
        (\th:1) arc[start angle=\th,end angle=\newA,radius=1];
    \draw[->, thick, purple]
        (\th:1) arc[start angle=\th,end angle=\newB,radius=1];

    \fill[blue] (XA) circle (0.05);
    \fill[purple] (XB) circle (0.05);

    \node[above right] at ($(X)+(0.03,0.04)$) {$\theta_t^\star(k)$};
    \node[right] at ($(T0)+(0.10,0)$) {$0$};
    \node[left]  at ($(Tpi)+(-0.10,0)$) {$\pi$};

    \node[blue, below right] at ($(XA)+(0.2,-0.02)$) {towards $0$};
    \node[purple, above left] at ($(XB)+(-0.02,0.02)$) {towards $\pi$};

    \node at (0,-1.30) {bi-modal noise step};
\end{tikzpicture}
\caption{}
\label{fig:acca_noise_b}
\end{subfigure}

\caption{One update of the \texttt{ACCA} dynamics under \texttt{Algorithm~2}.
(a) A randomly chosen edge $(i,j)$ is updated by the circular midpoint rule, producing an intermediate state $\theta_{t}^{*}$.
(b) Then a site $k$ is selected uniformly and its opinion is moved a fraction $\varepsilon$ toward one of the two antipodal targets $a\in\{0,\pi\}$, each chosen with probability $1/2$. In the schematic, the displacement is shown with $\varepsilon=0.35$ for visibility.}
\label{fig:acca_noise}
\end{figure}
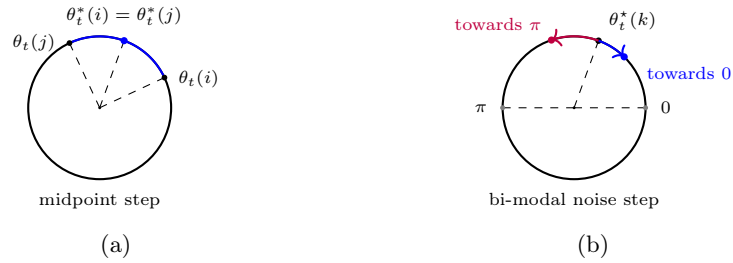
\subsection{\texttt{ACCA} dynamics with parallel updates and bi-modal noise}

We now introduce a generalization of the \texttt{ACCA} dynamics in which both the local averaging mechanism and the stochastic perturbation act simultaneously on multiple sites. This variant interpolates between the original sequential dynamics and a more parallel, cellular-automaton-like regime, allowing one to investigate how collective updates affect the formation and stability of coherent structures.

At each time step, a collection of disjoint edges is selected and updated in parallel according to the midpoint rule, producing an intermediate configuration. A stochastic perturbation is then applied in parallel to a randomly chosen subset of sites. The sizes of these subsets are fixed parameters of the dynamics, controlling the degree of parallelism in the deterministic and stochastic components.

\begin{algorithm}[H]
\caption{\texttt{ACCA} dynamics with parallel updates and bi-modal noise}
\label{alg:burst-acca}
\begin{algorithmic}[1]
\State Fix integers $k_{\mathrm{mid}} \geq 1$ and $k_{\mathrm{noise}} \geq 0$
\State Sample a set $\mathcal F \subseteq \mathcal E$ with $|\mathcal{F}|=k_{\mathrm{mid}}$ of disjoint edges uniformly at random
\State Set $\theta_t^\star \gets \theta_t$
\For{each edge $(i,j) \in \mathcal F$}
    \State apply \texttt{Algorithm}~1, obtaining $\theta^*_t(i), \theta^*_t(j)$
\EndFor
\State Sample a subset $B \subseteq V_N$ with $|B|=k_{\mathrm{noise}}$ uniformly at random
\State Set $\theta_{t+1} \gets \theta_t^\star$
\For{each $i \in B$}
    \State apply \texttt{Algorithm}~2, obtaining $\theta_{t+1}(i)$
\EndFor
\end{algorithmic}
\end{algorithm}

In this formulation, the midpoint interaction is applied simultaneously to a collection of $k_{\mathrm{mid}}$ disjoint edges. The disjointness condition ensures that each site is involved in at most one update, so that the parallel step is well defined. All midpoint moves are computed from the same configuration $\theta_t$, yielding an intermediate state $\theta_t^\star$.

Starting from this intermediate configuration, a stochastic perturbation is applied in parallel to a subset $B$ of $k_{\mathrm{noise}}$ sites, chosen uniformly without replacement. Each selected site is updated independently: a target angle $a \in \{0,\pi\}$ is drawn with probability $1/2$, and the opinion is moved a fraction $\varepsilon$ of the shortest circular distance towards this target. Sites not in $B$ remain unchanged.

For $k_{\mathrm{mid}}=1$ and $k_{\mathrm{noise}}=1$, the dynamics reduces to the standard sequential \texttt{ACCA} model with bi-modal noise defined in \texttt{Algorithm}~2. Increasing $k_{\mathrm{mid}}$ enhances the degree of parallel local averaging, while increasing $k_{\mathrm{noise}}$ produces more spatially extended and independent stochastic perturbations, which allows to interpolate between asynchronous and partially synchronous regimes.
\subsection{Properties of \texttt{ACCA} midpoint dynamics}
The qualitative behavior of the \texttt{ACCA} midpoint dynamics is well understood (see \cite{Brockhaus2026}). 
On finite connected graphs with open boundary conditions the system converges almost surely to consensus, as proved for related midpoint models in \cite{gantert2020} and for the \texttt{ACCA} dynamics in our previous work \cite{Brockhaus2026}.

Under periodic boundary conditions the situation is different. 
Because of the circular geometry, configurations can organize into metastable (trapped) twisted profiles characterized by a non–zero winding number $W_t$. 
In such states the opinions form an approximately linear phase profile along the ring. 
Although these configurations are not stationary, the midpoint averaging dynamics keeps the system close to the twisted manifold for very long times, producing a form of dynamical trapping.

When stochastic perturbations are introduced ($\varepsilon>0$), the dynamics changes qualitatively. 
The bi-modal forcing toward the antipodal angles $0$ and $\pi$ gradually destabilizes the twisted profiles and may induce collective switching between preferred orientations. 
Numerical experiments reveal a characteristic \emph{spin--flop} regime in which the macroscopic state alternates between two coherent orientations.
\section{Order parameters}
\label{sec:order}
The coexistence of winding trapping and antipodal switching requires two distinct macroscopic diagnostics: one sensitive to topological structure and one sensitive to global orientation.

In order to detect the onset of consensus, we first recall known  \emph{order parameters}.
Analogously to what is typically done for the Kuramoto model, as for example in \cite{Strogatz2000}, we can investigate the order parameters $R(t)$ and $\psi(t)$ defined as follows:
\begin{definition}[Order parameters $R$ and $\psi$]
 Let $\theta_t\in \Omega_N$ denote a configuration at time $t$. We then define the order parameters $R(t)$ and $\psi(t)$ by:

 \begin{equation}
 \label{e:order1}
R(t) e^{i \psi(t)}:=\frac{1}{N} \sum_{j=1}^N e^{i \theta_t(j)}.
\end{equation}

\end{definition}
The order parameter $R$ quantifies the degree of alignment among opinions, while $\psi$ represents the average opinion. Each opinion is represented as a direction and is represented as a point on the unit circle. By summing these directions as vectors, we obtain a resultant vector pointing in the average direction $\psi$, whose magnitude $R$ indicates the strength of alignment. If $R=0$, there is no alignment, i.e. the opinions are uniformly distributed around the circle. If $R=1$ there is perfect consensus, which means that all opinions are identical. Thus, the closer $R$ is to 1, the more synchronized the opinions are. In the noisy extension with symmetric bi-modal perturbations toward the two
antipodal angles $0$ and $\pi$, the dynamics may display a
\emph{spin--flop regime}. We use the term \emph{spin--flop regime} to denote a dynamics in which
the system spends long periods of time near one of two metastable
collective orientations before abruptly switching to the other. 
In this regime the system concentrates close to the two states
\[
\theta_i \approx \frac{\pi}{2}
\qquad\text{or}\qquad
\theta_i \approx -\frac{\pi}{2},
\qquad i=1,\dots,N.
\]
 In the observable
$Y(t)$, the phenomenon appears as long positive and negative plateaus
separated by relatively rapid transitions.
Such configurations are not well detected by the classical Kuramoto
order parameter $R(t)$ alone, since $R(t)$ only measures the degree of
alignment but not the orientation of the collective state.
To distinguish the two branches it is convenient to consider the
macroscopic projection onto the $y$ axis,
\begin{equation}
\label{eq:flipflop}
Y(t):=\frac1N\sum_{j=1}^N \sin(\theta_t(j)).
\end{equation}
Configurations concentrated near $\theta=\frac{\pi}{2}$ yield
$Y(t)\approx 1$, while configurations concentrated near
$\theta=-\frac{\pi}{2}$ give $Y(t)\approx -1$.
Therefore the sign of $Y(t)$ distinguishes the two \emph{spin-flop}
orientations, and transitions of $Y(t)$ between positive and negative
plateaus provide a clear macroscopic signature of the spin--flop
dynamics.

We recall the definition of \emph{winding configurations} given in \cite{Brockhaus2026} and graphically shown in Figure~\ref{fig:100_periodic_eps000} panael (a).
\begin{definition}[Winding configuration]
Let $\theta\in[-\pi,\pi)^N$ be a configuration on $\mathcal{E}_p$.
Its winding number is defined by
\[
W(\theta)
:=
\frac1{2\pi}
\sum_{(i,j)\in \mathcal{E}_p}
\operatorname{wrap}_{\pi}\!\bigl(\theta(j)-\theta(i)\bigr).
\]
We say that $\theta$ is a winding configuration with winding number
$W\in\mathbb Z$ if $W(\theta)=W$.
\end{definition}
A winding configuration represents a coherent twisted profile around the
ring: after traversing the graph once, the lifted phase accumulates a
total increment of $2\pi W$. In our previous work \cite{Brockhaus2026},
such configurations were shown to organize the transient dynamics into
topological sectors labelled by the winding number, leading to long-lived
\emph{metastable} states and topological trapping before eventual convergence
to consensus.
In order to detect winding configurations we introduce an order parameter
based on a nonparametric correlation coefficient on the torus,
analogous to Kendall's $\tau$ (see \cite{Zhang2019}).

This observable measures the statistical alignment between two angular
variables while taking into account the periodic geometry.

Let $P_1=(\theta_1,\phi_1)$ and $P_2=(\theta_2,\phi_2)$ be two points on the
torus. Intuitively, the pair is said to be \emph{concordant} if the circular
order from $\theta_1$ to $\theta_2$ and from $\phi_1$ to $\phi_2$ has the
same orientation, and \emph{discordant} otherwise.

\begin{definition}[Weighted toroidal Kendall coefficient]
Let $(\Theta_1,\Phi_1)$ and $(\Theta_2,\Phi_2)$ be i.i.d.\ copies of
$(\Theta,\Phi)$. The \emph{weighted toroidal Kendall coefficient} is defined as:
\begin{equation}
\label{eq:tau1-def}
\tau_1
:=
\frac{\mathbb{E}\!\left[\wrap(\Theta_2-\Theta_1)
\mathrm{sign}\!\big(\wrap(\Phi_2-\Phi_1)\big)\right]}
{\mathbb{E}\!\left[|\wrap(\Theta_2-\Theta_1)|\right]},
\end{equation}
where it is assumed that the denominator is non-zero.
\end{definition}
The normalization ensures that $\tau_1\in[-1,1]$.
The coefficient $\tau_1$ measures the correlation between the circular
ordering of $\Theta$ and that of $\Phi$, while preserving the magnitude
of the wrapped difference.

We now compute $\tau_1$ for an idealized probabilistic model of a winding configuration.
Let
\[
\Theta_1,\Theta_2
\overset{\mathrm{i.i.d.}}{\sim}
\mathrm{Unif}(-\pi,\pi],
\]
and define

\begin{equation}
\label{eq:twist-ideal}
\Phi_i
:=
\alpha+W\Theta_i
\pmod{2\pi},
\qquad i\in\{1,2\},
\end{equation}
where $W\in\mathbb Z$ and $\alpha\in[-\pi,\pi)$.
We have the following theorem:
\begin{theorem}
\label{th:main}
Under the model \eqref{eq:twist-ideal},
for $W\neq0$ one has
\begin{equation}
\label{eq:tau1-final}
\tau_1=\frac{(-1)^{W+1}}{W}.
\end{equation}
In particular
$|\tau_1|=\frac{1}{|W|}$,
for  $W\neq0.$
\end{theorem}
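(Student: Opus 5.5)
The plan is to reduce $\tau_1$ to a one-dimensional integral over a single uniform angular difference. After that, the numerator becomes an integral of $x$ against a square wave of period $2\pi/|W|$, which can be evaluated interval by interval.

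\textbf{Reduction.} Set $D:=\wrap(\Theta_2-\Theta_1)$. Since $\Theta_1,\Theta_2$ are i.i.d.\ uniform on the circle, $D$ is again uniform on $[-\pi,\pi)$; this follows from translation invariance of Haar measure on $\mathbb R/2\pi\mathbb Z$.
\begin{itemize}
\item By \eqref{eq:twist-ideal}, $\Phi_2-\Phi_1\equiv W(\Theta_2-\Theta_1)\equiv WD \pmod{2\pi}$. The offset $\alpha$ cancels, and because $\wrap$ only depends on its argument modulo $2\pi$, we get $\wrap(\Phi_2-\Phi_1)=\wrap(WD)$.
\item Hence
\[
\tau_1=\frac{\mathbb E\bigl[D\,\mathrm{sign}(\wrap(WD))\bigr]}{\mathbb E|D|}.
\]
\item The denominator is $\mathbb E|D|=\pi/2\neq0$.
\end{itemize}

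\textbf{Numerator.} Let $s(y):=\mathrm{sign}(\wrap(y))$. Off a null set, $s$ is the $2\pi$-periodic odd square wave, equal to $+1$ on $(0,\pi)$ and $-1$ on $(\pi,2\pi)$. The convention $\wrap(\cdot)\in[-\pi,\pi)$ matters only at multiples of $\pi$, which have measure zero.
\begin{itemize}
\item The map $x\mapsto x\,s(Wx)$ is even, so the numerator equals $\frac1\pi\int_0^\pi x\,s(Wx)\,dx$.
\item Take first $W\ge1$. Split $[0,\pi]$ into the $W$ intervals $[k\pi/W,(k+1)\pi/W]$, on which $s(Wx)=(-1)^k$. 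This gives
\[
\int_0^\pi x\,s(Wx)\,dx=\frac{\pi^2}{2W^2}\sum_{k=0}^{W-1}(-1)^k(2k+1).
\]
\item The alternating sum of odd numbers equals $(-1)^{W+1}W$; this follows by pairing consecutive terms, each pair contributing $-2$, plus a final $+(2W-1)$ when $W$ is odd.
\item Therefore the numerator is $\frac{\pi}{2W}(-1)^{W+1}$, and dividing by $\pi/2$ yields $\tau_1=(-1)^{W+1}/W$.
\end{itemize}

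\textbf{Negative winding and main obstacle.} For $W\le -1$, use that $s$ is odd almost everywhere, so $s(Wx)=-s(|W|x)$. Then $\tau_1=-(-1)^{|W|+1}/|W|=(-1)^{W+1}/W$, since $(-1)^{|W|}=(-1)^{W}$. The statement $|\tau_1|=1/|W|$ follows immediately.

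The only genuinely delicate step is the reduction. It must be justified that $D$ is exactly uniform, and that replacing $\Phi_2-\Phi_1$ by $WD$ modulo $2\pi$ is legitimate inside both $\wrap$ and $\mathrm{sign}$, including the boundary convention of $\wrap$ on null sets. Once that is in place, the rest is the elementary piecewise integration above.

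As a cross-check, one could instead expand the square wave in its Fourier series, $s(y)=\frac4\pi\sum_{m\ \mathrm{odd}}\sin(my)/m$, and integrate against $x$ term by term. This should reproduce the same value.
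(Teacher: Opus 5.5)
Your proposal is correct and follows essentially the same route as the paper. Both arguments reduce to the uniformly distributed wrapped difference, identify $\mathrm{sign}(\wrap(Wx))$ with a square wave (the paper writes it as $\mathrm{sign}(\sin(Wx))$), use evenness, and sum over the $W$ half-period intervals using the alternating sum of odd numbers. Your separate treatment of $W\le -1$, via oddness of the square wave, fills a small gap: the paper's interval decomposition tacitly assumes $W>0$.
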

\begin{proof}
Let
\[
\tilde\Delta :=  \wrap(\Theta_2-\Theta_1).
\]

Since $\Theta_1,\Theta_2$ are independent and uniformly distributed on
$(-\pi,\pi]$, the wrapped difference $\tilde\Delta$ is uniformly distributed
on $(-\pi,\pi]$.
Indeed, if $X=\Theta_2-\Theta_1$, the density of $X$ is triangular on
$(-2\pi,2\pi)$:
\[
f_X(x)=\frac{2\pi-|x|}{4\pi^2}.
\]
Wrapping $X$ into $(-\pi,\pi]$ folds the density onto this interval,
and a direct computation shows that the resulting density is constant
$1/(2\pi)$ (see proof Lemma~4.5 in \cite{Brockhaus2026}). Under the twisted assumption \eqref{eq:twist-ideal} we have $
\wrap(\Phi_2-\Phi_1)
=\wrap(W\tilde\Delta)$.
Since $\tilde\Delta$ has a continuous distribution, the event
$\tilde\Delta\in\pi\mathbb Z$ has probability zero. 
Moreover, $\wrap(x)\in[-\pi,\pi)$, one has 
$\mathrm{sign}(\wrap(x))=\mathrm{sign}(\sin x)$ for all $x\notin\pi\mathbb{Z}$.
Hence,
\[
\mathrm{sign}\!\big(\wrap(\Phi_2-\Phi_1)\big)
=
\mathrm{sign}\big(\sin(W\tilde\Delta)\big), \,\,\textnormal{a.s.}
\]
Therefore
\begin{equation}
\label{eq:tau1-int}
\mathbb{E}\!\left[
\tilde\Delta\,\mathrm{sign}(\wrap(\Phi_2-\Phi_1))
\right]
=
\frac{1}{2\pi}
\int_{-\pi}^{\pi}
x\,\mathrm{sign}(\sin(Wx))\,dx .
\end{equation}

Since the integrand is even, this becomes
\[
\frac{1}{\pi}\int_0^\pi x\,\mathrm{sign}(\sin(Wx))\,dx .
\]

On $(0,\pi)$ the function $\sin(Wx)$ changes sign at
$x_k=k\pi/W$ for $k=0,\dots,W$.
Hence
\[
\int_0^\pi x\,\mathrm{sign}(\sin(Wx))\,dx
=
\sum_{k=0}^{W-1}
(-1)^k
\int_{k\pi/W}^{(k+1)\pi/W}x\,dx .
\]
A direct computation gives
\[
\int_{k\pi/W}^{(k+1)\pi/W}x\,dx
=
\frac{(2k+1)\pi^2}{2W^2}.
\]
Thus
\[
\int_0^\pi x\,\mathrm{sign}(\sin(Wx))\,dx
=
\frac{\pi^2}{2W^2}
\sum_{k=0}^{W-1}(-1)^k(2k+1).
\]
The alternating sum evaluates to
\[
\sum_{k=0}^{W-1}(-1)^k(2k+1)
=
(-1)^{W+1}W .
\]
Hence
\[
\mathbb{E}\!\left[
\tilde\Delta\,\mathrm{sign}(\wrap(\Phi_2-\Phi_1))
\right]
=
\frac{(-1)^{W+1}\pi}{2W}.
\]
Finally,
\[
\mathbb{E}[|\tilde\Delta|]
=
\frac{1}{2\pi}\int_{-\pi}^{\pi}|x|dx
=
\frac{\pi}{2},
\]
so that: $\tau_1=\frac{(-1)^{W+1}}{W}$.\hspace{8.3cm} $\blacksquare$
\end{proof}
Theorem~\ref{th:main} provides the theoretical justification for using the weighted toroidal Kendall coefficient as a macroscopic diagnostic of topological organization in the \texttt{ACCA} dynamics. In particular, it establishes an explicit quantitative relation between the winding number of an ideal twisted configuration and the observable $\tau_1$, showing that distinct winding sectors produce characteristic inverse-signature values. This result is important because it allows one to detect, distinguish, and monitor winding trapping directly from simulation data without reconstructing the full microscopic topological structure. Consequently, $\tau_1$ serves as a practical order parameter linking the rigorous topological theory of the deterministic model to the numerical exploration of noisy and partially parallel extensions developed in the present work.
$\tau_1$ detects indeed twisted configurations through a
characteristic inverse dependence on the winding number. 
Despite the fact that the classical Kuramoto order parameter $R(t)$
is well suited to detect convergence to global consensus, it does not distinguish between a flat profile and a coherent
twisted configuration with nonzero winding number.
In a trapping winding state the configuration is highly ordered
along the ring but not phase–aligned, and therefore $R(t)$ may remain
moderate even though the system is strongly structured.
\section{Results}
\label{sec:results}
All the experiments in this section are started from a random configuration $\theta_0\in\Omega_N$ with $\theta_0(i)\overset{i.i.d.}{\sim} \textnormal{Unif}[-\pi,\pi)$, for $i\in V_N$, where $N=100$.

Figure~\ref{fig:100_periodic_eps000} illustrates the midpoint \texttt{ACCA} dynamics of \texttt{Algorithm~1} under periodic boundary conditions (i.e., $\mathcal{E}=\mathcal{E}_p$) for a configuration with winding number $W=2$. 
In particular, in the panel (b) the space--time heatmap (top left) shows that the system rapidly organizes into a twisted configuration characterized by an approximately linear phase profile along the ring. 
The winding structure is clearly detected by the circular Kendall coefficient $\tau_1(t)$ (bottom right), which quickly converges to a constant value consistent with the theoretical prediction $|\tau_1|=1/|W|=1/2$. 
At the same time the classical Kuramoto order parameter $R(t)$ (top right) remains small, indicating that the configuration is highly ordered along the ring but not phase--aligned, as expected for a twisted state. 
Finally, the macroscopic orientation observable $Y(t)$ (bottom left) fluctuates around zero, which reflects the absence of a preferred global orientation in the deterministic twisted configuration. 
Together, these observables clearly distinguish winding trapping from global consensus and provide complementary diagnostics of the system's macroscopic organization.

\begin{figure}[t]
\centering

\makebox[\textwidth][c]{%
\begin{subfigure}[t]{0.515\textwidth}
    \centering
    \includegraphics[
        width=1.06\linewidth,
        trim=0.2cm 0.2cm 0.2cm 0.2cm,
        clip
    ]{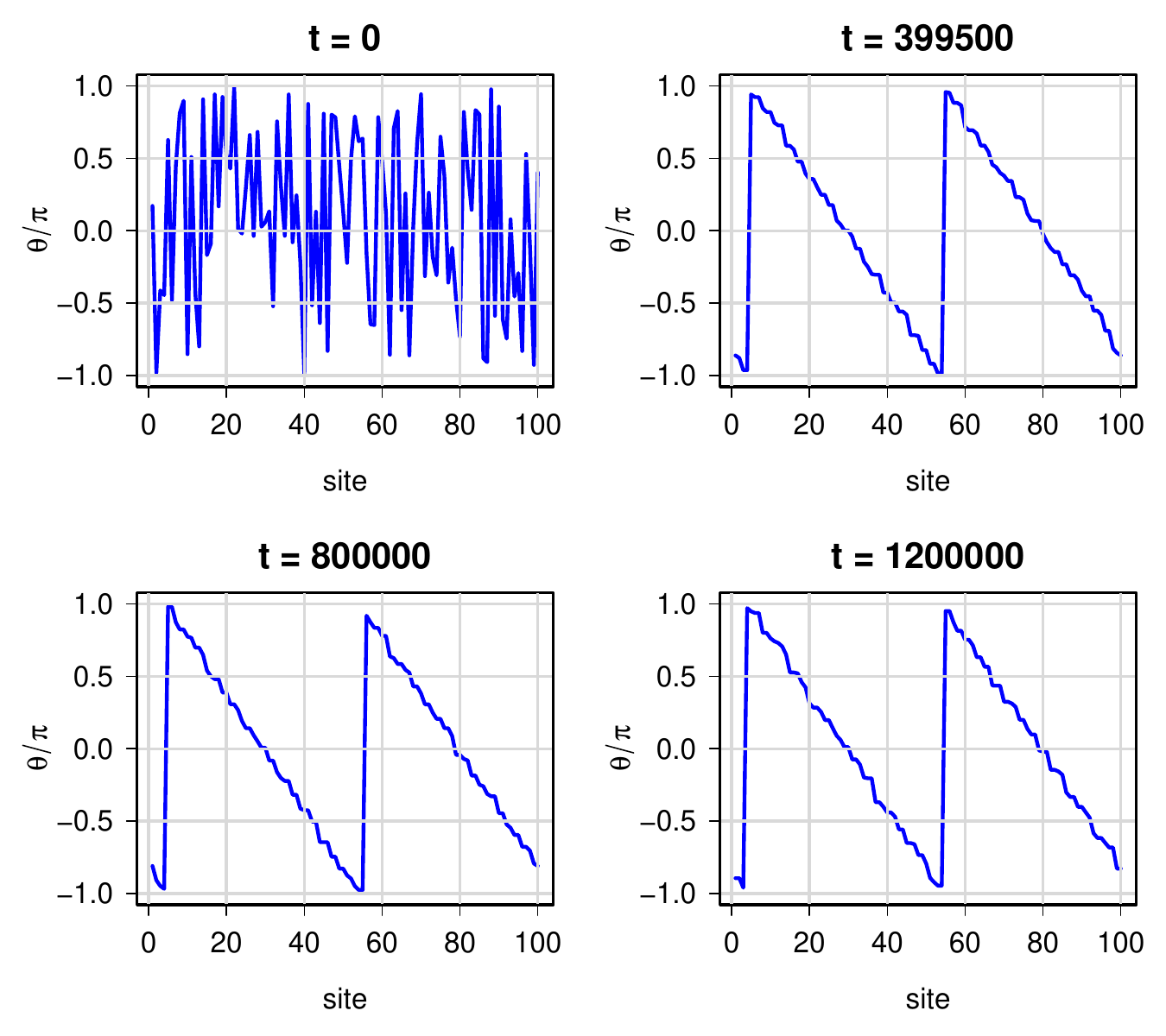}
    \caption*{(a)}
    \label{fig:periodic_eps000_a}
\end{subfigure}
\hspace{0.01\textwidth}
\begin{subfigure}[t]{0.515\textwidth}
    \centering
    \includegraphics[
        width=1.06\linewidth,
        trim=0.2cm 0.2cm 0.2cm 0.2cm,
        clip
    ]{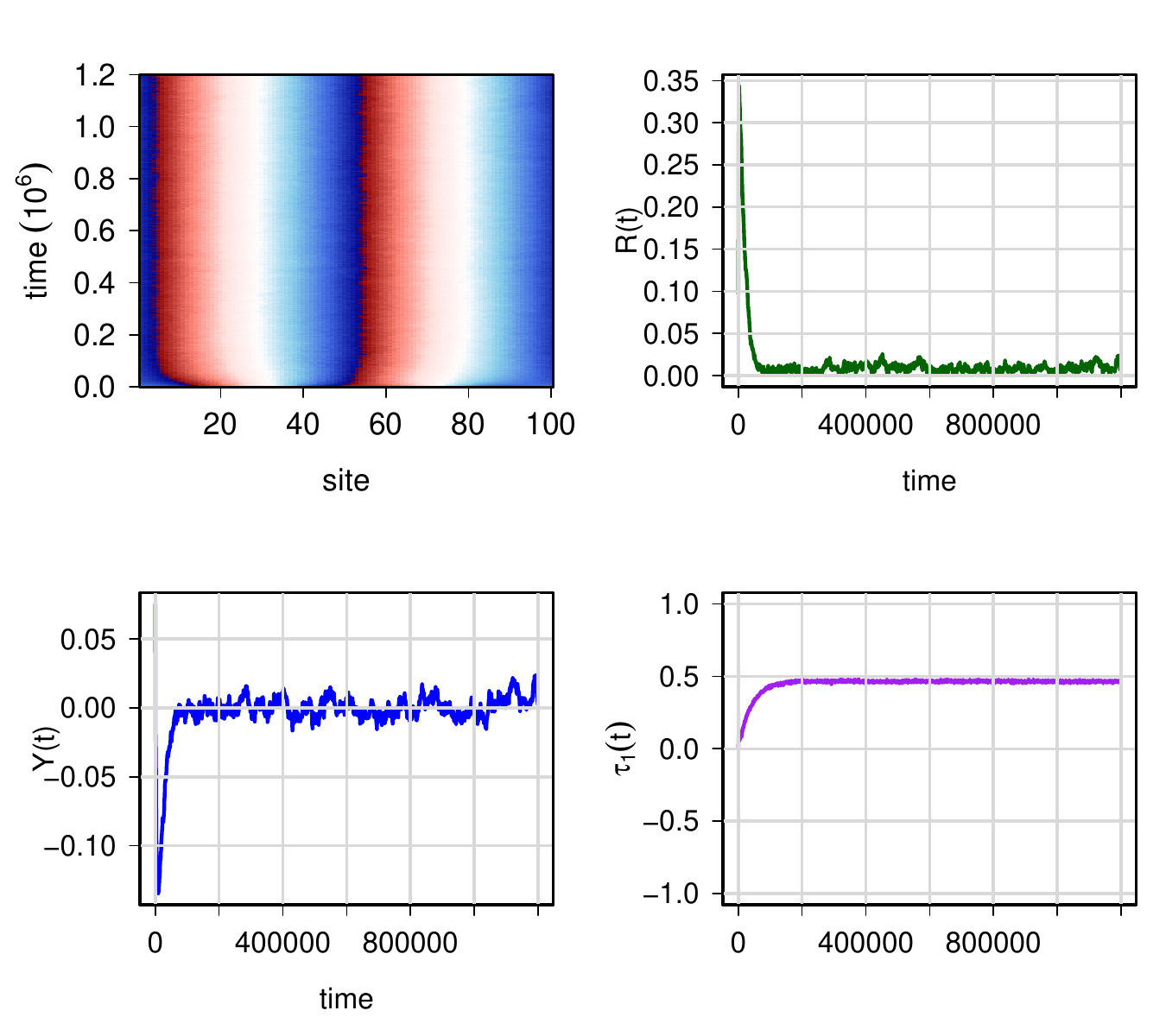}
    \caption*{(b)}
    \label{fig:periodic_eps000_b}
\end{subfigure}
}

\vspace{-0.4em}

\caption{
ACCA dynamics with $N=100$ and $\varepsilon=0$ under periodic boundary conditions.
(a) Configuration snapshots at four different times.
(b) Heatmap of the time evolution together with the order parameters
$R(t)$, $Y(t)$ and $\tau_1(t)$.
}
\label{fig:100_periodic_eps000}

\vspace{-0.6em}

\end{figure}
Figure~\ref{fig:100_periodic_eps} illustrates the effect of bi-modal perturbations (i.e. \texttt{Algorithm~2}) under periodic boundary conditions.
In contrast with the deterministic case, where the configuration remains trapped near a twisted ramp associated with a fixed winding number, the presence of noise progressively destabilizes this structure.
For weak perturbations (e.g.,\ $\varepsilon=0.002$), the ramp remains clearly visible in the space--time heatmap and the configuration stays close to a coherent winding profile for long times.
As the noise intensity increases (e.g.,\ $\varepsilon=0.02$), however, the ramp is gradually destroyed and the system no longer maintains a stable linear profile along the ring. The resulting dynamics is not characterized by well-formed plateaus around the two spin--flop states.
Rather, the observable $Y(t)$ displays large collective fluctuations, indicating repeated global reorientations of the configuration.
These fluctuations occasionally bring the system close to the two preferred orientations, but without producing the clear  plateaus observed under open boundary conditions.
At the same time, the classical Kuramoto order parameter $R(t)$ remains relatively small, showing that the system does not approach global consensus, while the toroidal Kendall coefficient $\tau_1(t)$ progressively departs from the deterministic value associated with the winding sector.
This indicates that bi-modal perturbations gradually erode the topological trapping induced by the \texttt{ACCA} dynamics: the winding structure becomes less stable, and the evolution is increasingly dominated by large-scale fluctuations rather than by confinement near a coherent twisted ramp.


\begin{figure}[t]
\centering

\makebox[\textwidth][c]{%
\begin{subfigure}[t]{0.515\textwidth}
    \centering
    \includegraphics[
        width=1.06\linewidth,
        trim=0.2cm 0.2cm 0.2cm 0.2cm,
        clip
    ]{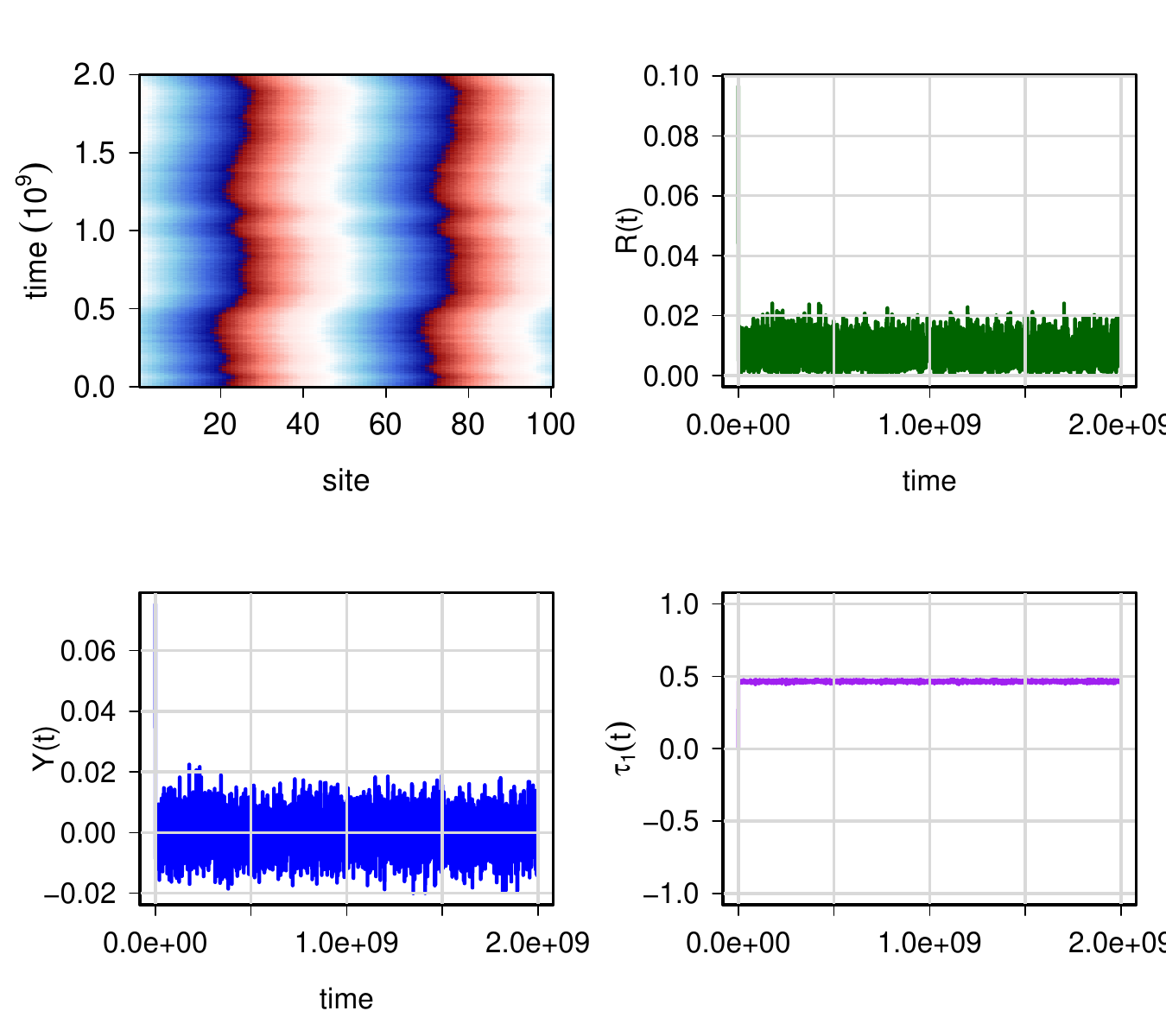}
    \caption*{(a) $\varepsilon=0.002$}
    \label{fig:periodic_eps0002_a}
\end{subfigure}
\hspace{0.01\textwidth}
\begin{subfigure}[t]{0.515\textwidth}
    \centering
    \includegraphics[
        width=1.06\linewidth,
        trim=0.2cm 0.2cm 0.2cm 0.2cm,
        clip
    ]{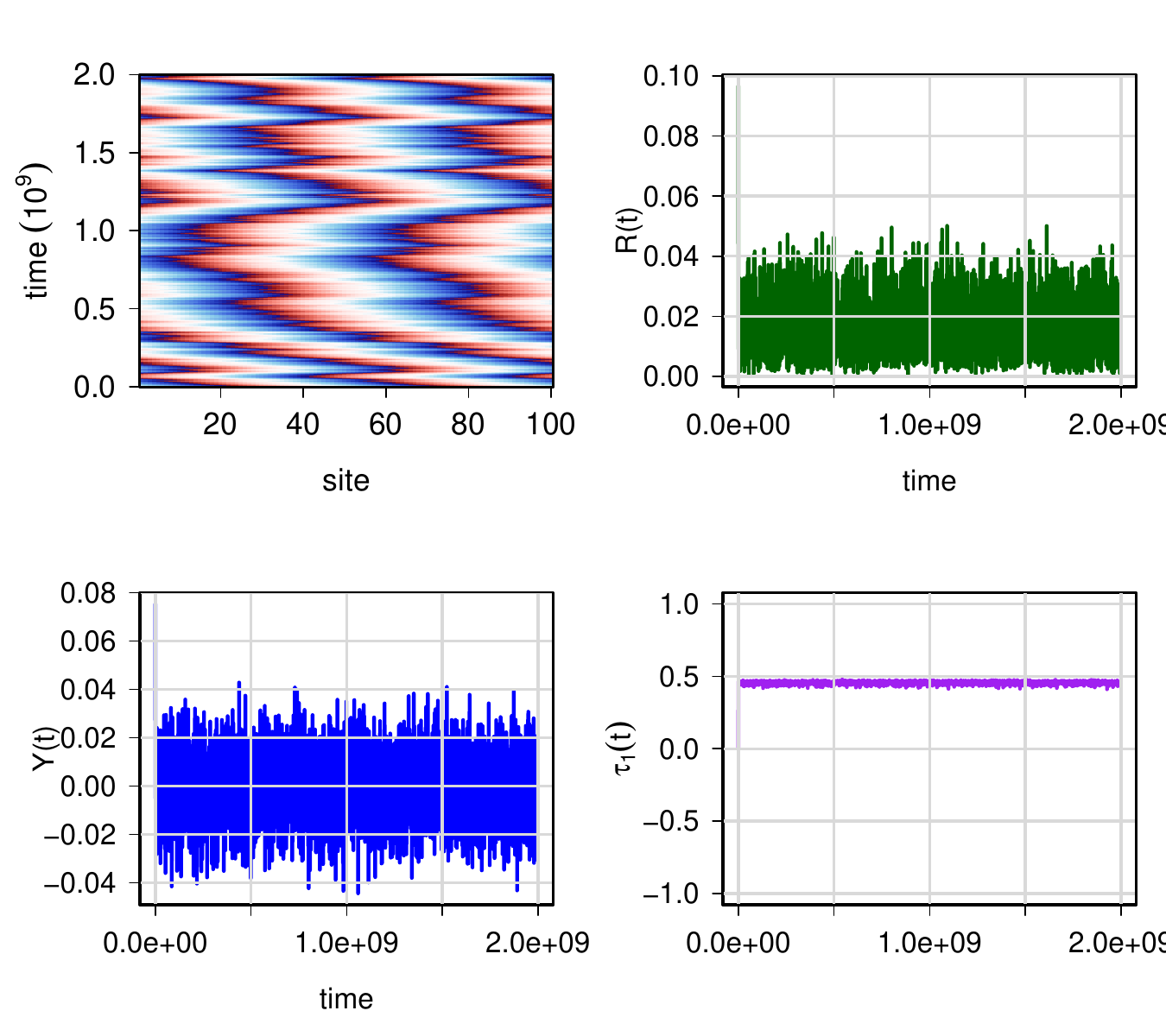}
    \caption*{(b) $\varepsilon=0.02$}
    \label{fig:periodic_eps002_b}
\end{subfigure}
}

\vspace{-0.4em}

\caption{
Heatmap of the time evolution together with the order parameters
$R(t)$, $Y(t)$ and $\tau_1(t)$ for the \texttt{ACCA} dynamics
with $N=100$ under periodic boundary conditions and non-zero noise.
Panel (a) corresponds to $\varepsilon=0.002$, while panel (b)
corresponds to $\varepsilon=0.02$.
}
\label{fig:100_periodic_eps}

\vspace{-0.6em}

\end{figure}
Figures~\ref{fig:100_open_eps000}--\ref{fig:100_open_eps0002} illustrate the dynamics under open boundary conditions (i.e., $\mathcal{E}=\mathcal{E}_\emptyset$).
In the deterministic case ($\varepsilon=0$), the system converges to consensus, as established in \cite{gantert2020,Brockhaus2026}. The introduction of bi-modal perturbations leads to a qualitatively different regime.
Even for very small noise intensity, the configuration rapidly organizes into states concentrated near the two antipodal orientations $\pm\pi/2$.
This reflects the breaking of rotational symmetry induced by the bi-modal noise. The observable $Y(t)$ clearly captures this behavior:
it alternates between positive and negative plateaus, corresponding to configurations concentrated near $\theta\approx\pi/2$ and $\theta\approx-\pi/2$.
These plateaus indicate the presence of metastable macroscopic states. In contrast with the periodic case, no topological constraint is present, and the dynamics does not involve winding trapping.
Instead, the behavior is dominated by the competition between midpoint averaging, which promotes local coherence, and bi-modal noise, which selects the two preferred orientations.
As a result, the system repeatedly reorganizes around $\pm\pi/2$, producing a clear and robust spin--flop dynamics. Importantly, these states are not stationary: the configuration continues to evolve within each plateau, reflecting the persistent action of both averaging and noise.

\begin{figure}[t]
\centering

\makebox[\textwidth][c]{%
\begin{subfigure}[t]{0.515\textwidth}
    \centering
    \includegraphics[
        width=1.06\linewidth,
        trim=0.2cm 0.2cm 0.2cm 0.2cm,
        clip
    ]{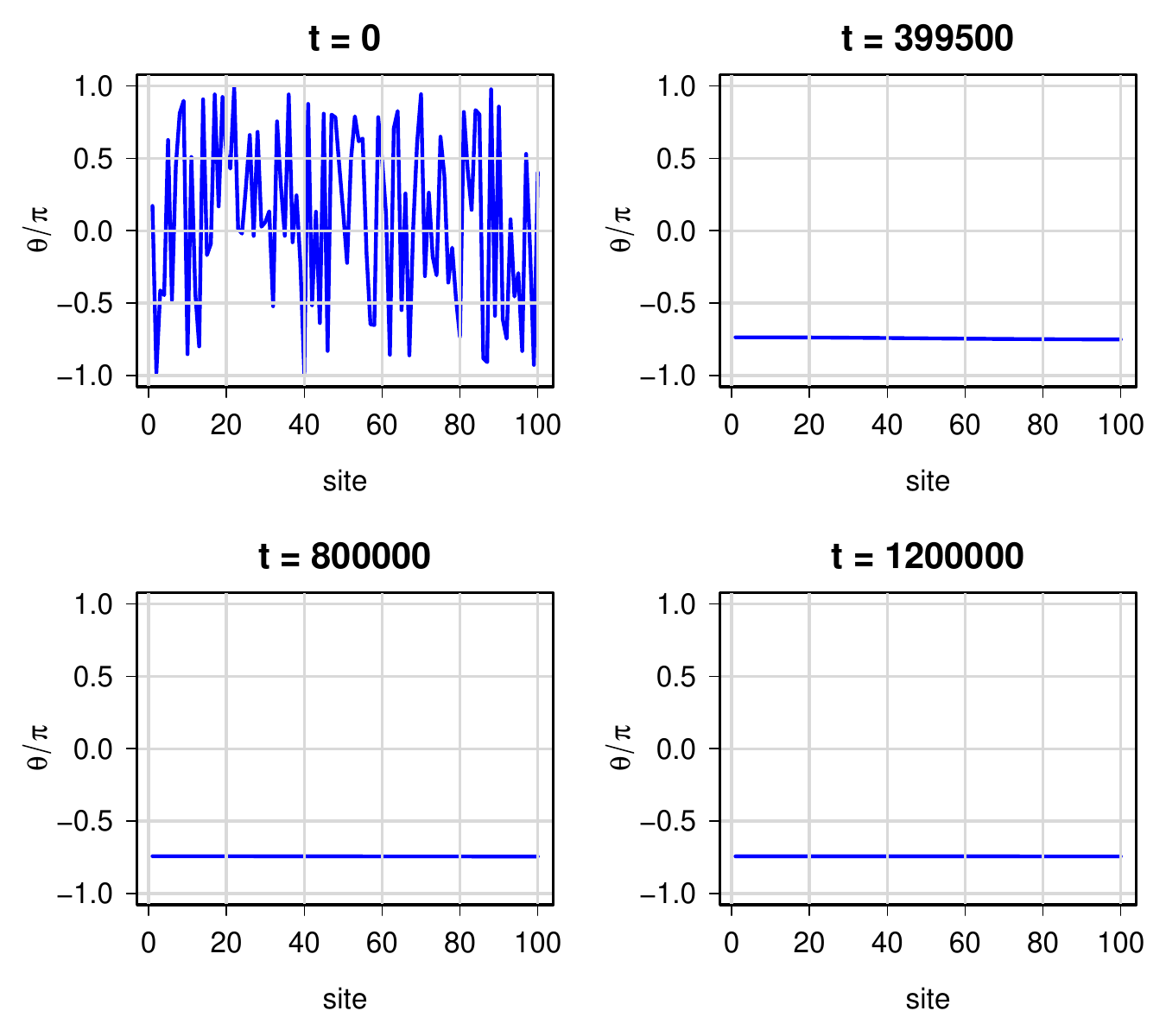}
    \caption*{(a)}
    \label{fig:open_eps000_a}
\end{subfigure}
\hspace{0.01\textwidth}
\begin{subfigure}[t]{0.515\textwidth}
    \centering
    \includegraphics[
        width=1.06\linewidth,
        trim=0.2cm 0.2cm 0.2cm 0.2cm,
        clip
    ]{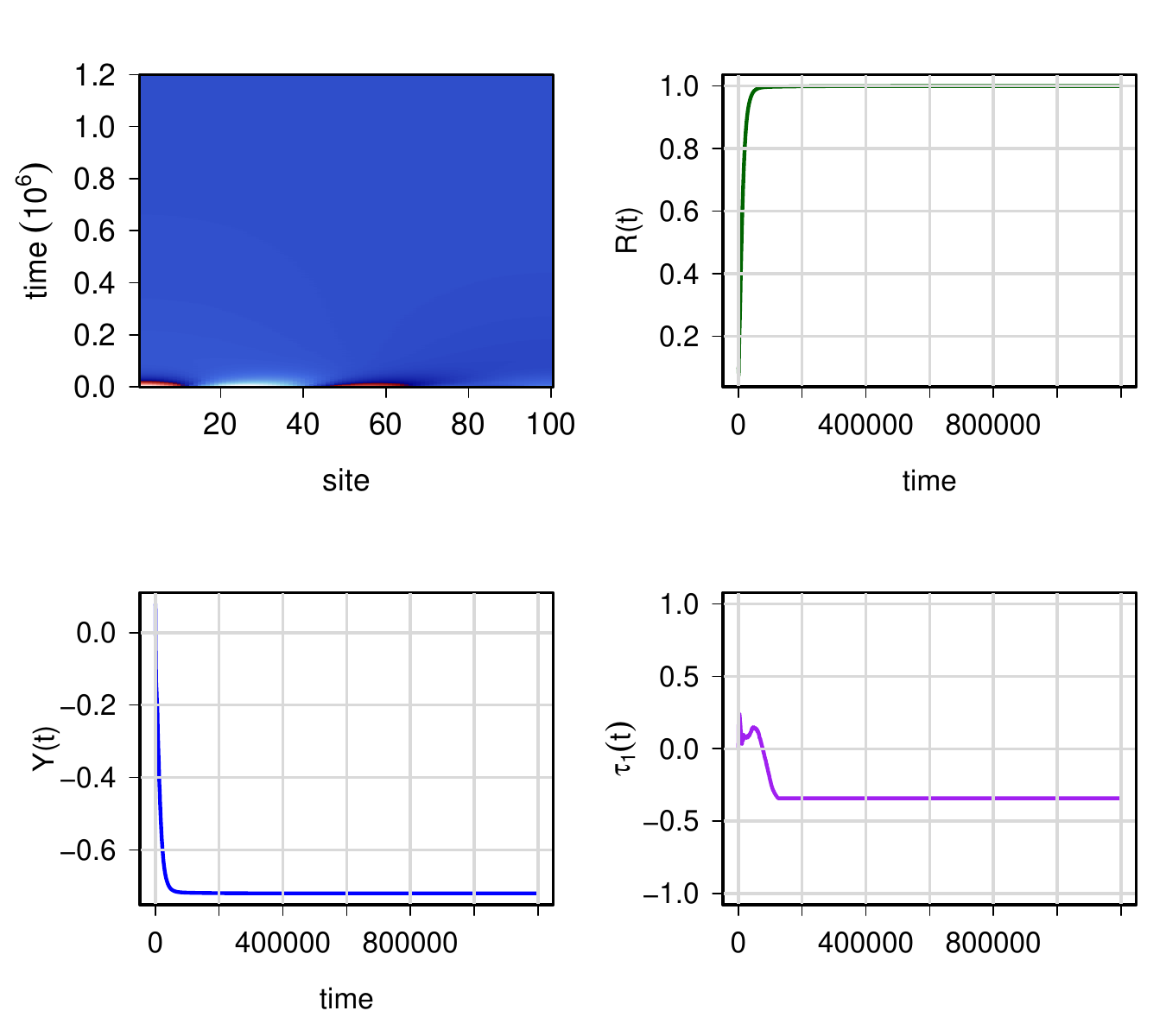}
    \caption*{(b)}
    \label{fig:open_eps000_b}
\end{subfigure}
}

\vspace{-0.4em}

\caption{
ACCA dynamics with $N=100$ and $\varepsilon=0$ under open boundary conditions.
(a) Configuration snapshots at four different times.
(b) Heatmap of the time evolution together with the order parameters
$R(t)$, $Y(t)$ and $\tau_1(t)$.
}
\label{fig:100_open_eps000}

\vspace{-0.6em}

\end{figure}


\begin{figure}[t]
\centering

\makebox[\textwidth][c]{%
\begin{subfigure}[t]{0.515\textwidth}
    \centering
    \includegraphics[
        width=1.06\linewidth,
        trim=0.2cm 0.2cm 0.2cm 0.2cm,
        clip
    ]{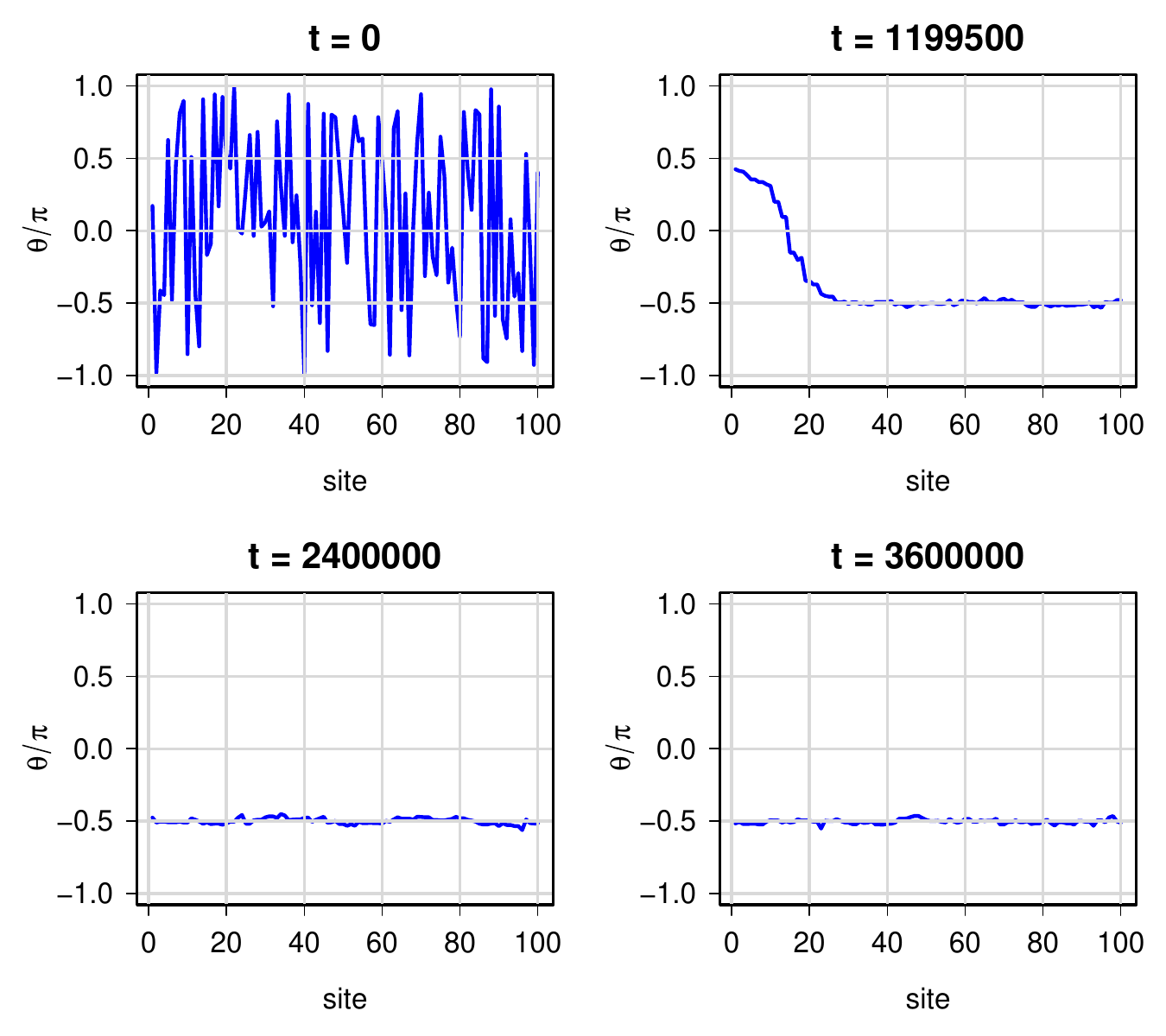}
    \caption*{(a)}
    \label{fig:open_eps002_a}
\end{subfigure}
\hspace{0.01\textwidth}
\begin{subfigure}[t]{0.515\textwidth}
    \centering
    \includegraphics[
        width=1.06\linewidth,
        trim=0.2cm 0.2cm 0.2cm 0.2cm,
        clip
    ]{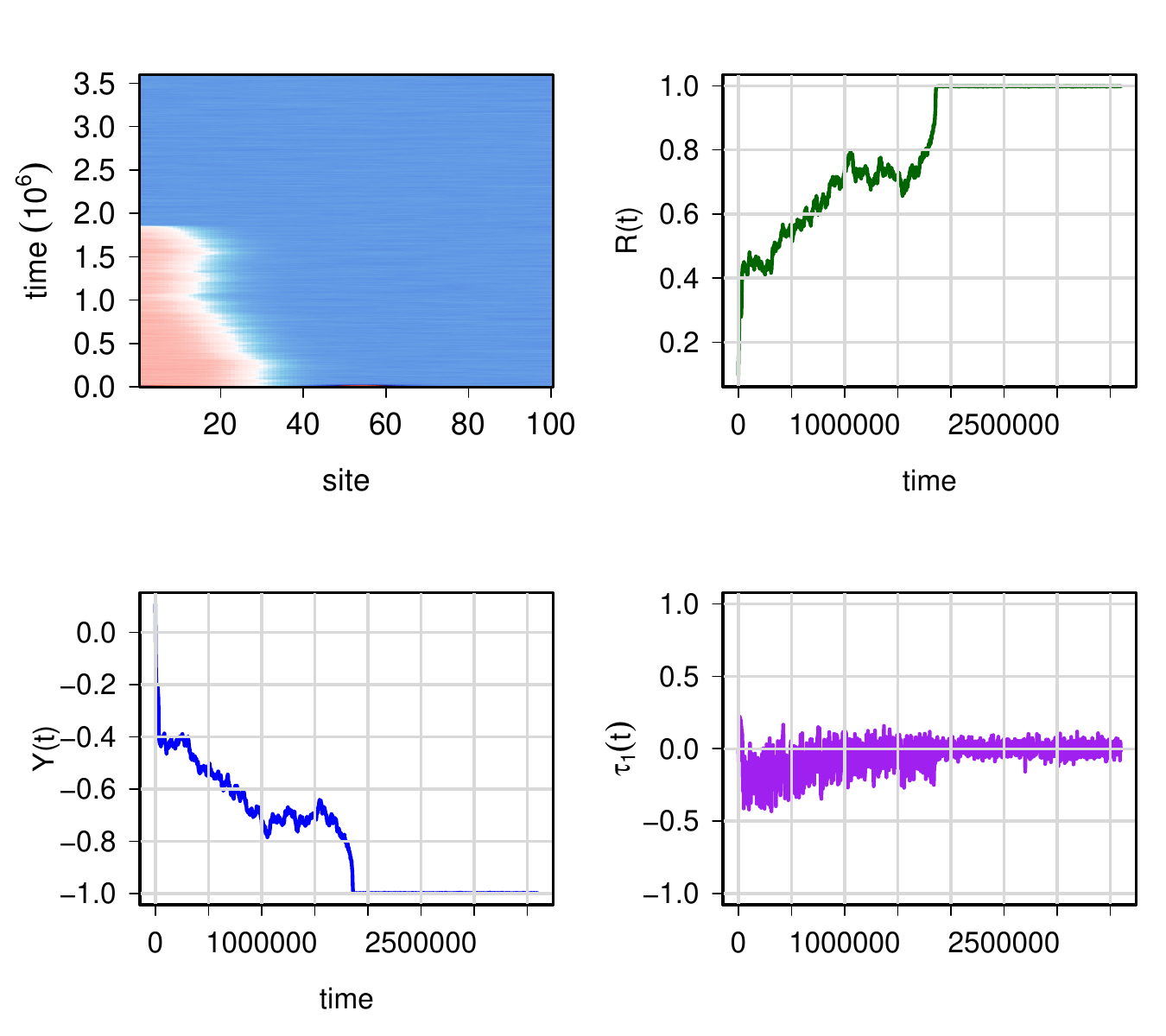}
    \caption*{(b)}
    \label{fig:open_eps002_b}
\end{subfigure}
}

\vspace{-0.4em}

\caption{
ACCA dynamics with $N=100$ and $\varepsilon=0.02$ under open boundary conditions.
(a) Configuration snapshots at four different times.
(b) Heatmap of the time evolution together with the order parameters
$R(t)$, $Y(t)$ and $\tau_1(t)$.
}
\label{fig:100_open_eps002}

\vspace{-0.6em}

\end{figure}


\begin{figure}[t]
\centering

\makebox[\textwidth][c]{%
\begin{subfigure}[t]{0.515\textwidth}
    \centering
    \includegraphics[
        width=1.06\linewidth,
        trim=0.2cm 0.2cm 0.2cm 0.2cm,
        clip
    ]{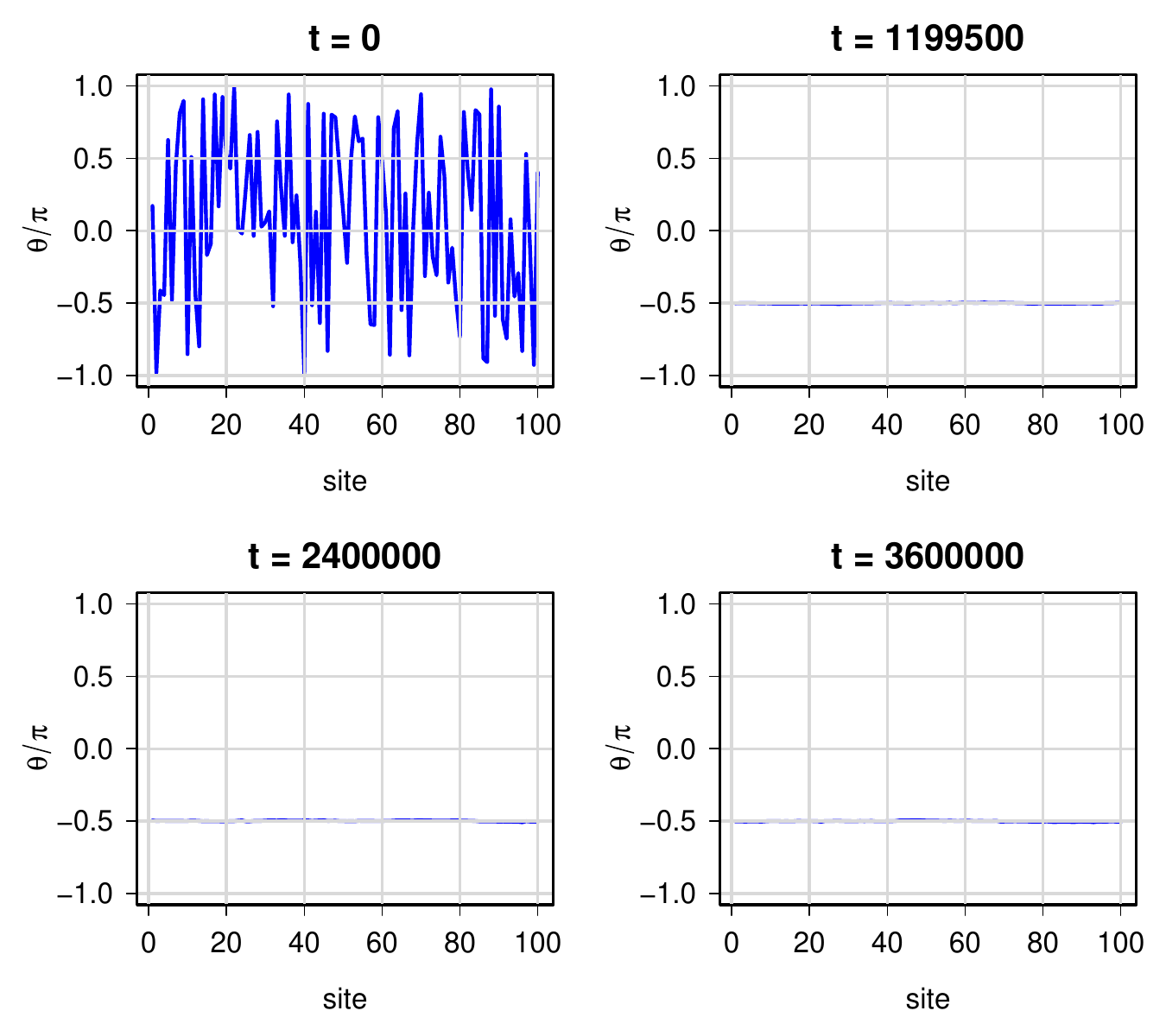}
    \caption*{(a)}
    \label{fig:open_eps0002_a}
\end{subfigure}
\hspace{0.01\textwidth}
\begin{subfigure}[t]{0.515\textwidth}
    \centering
    \includegraphics[
        width=1.06\linewidth,
        trim=0.2cm 0.2cm 0.2cm 0.2cm,
        clip
    ]{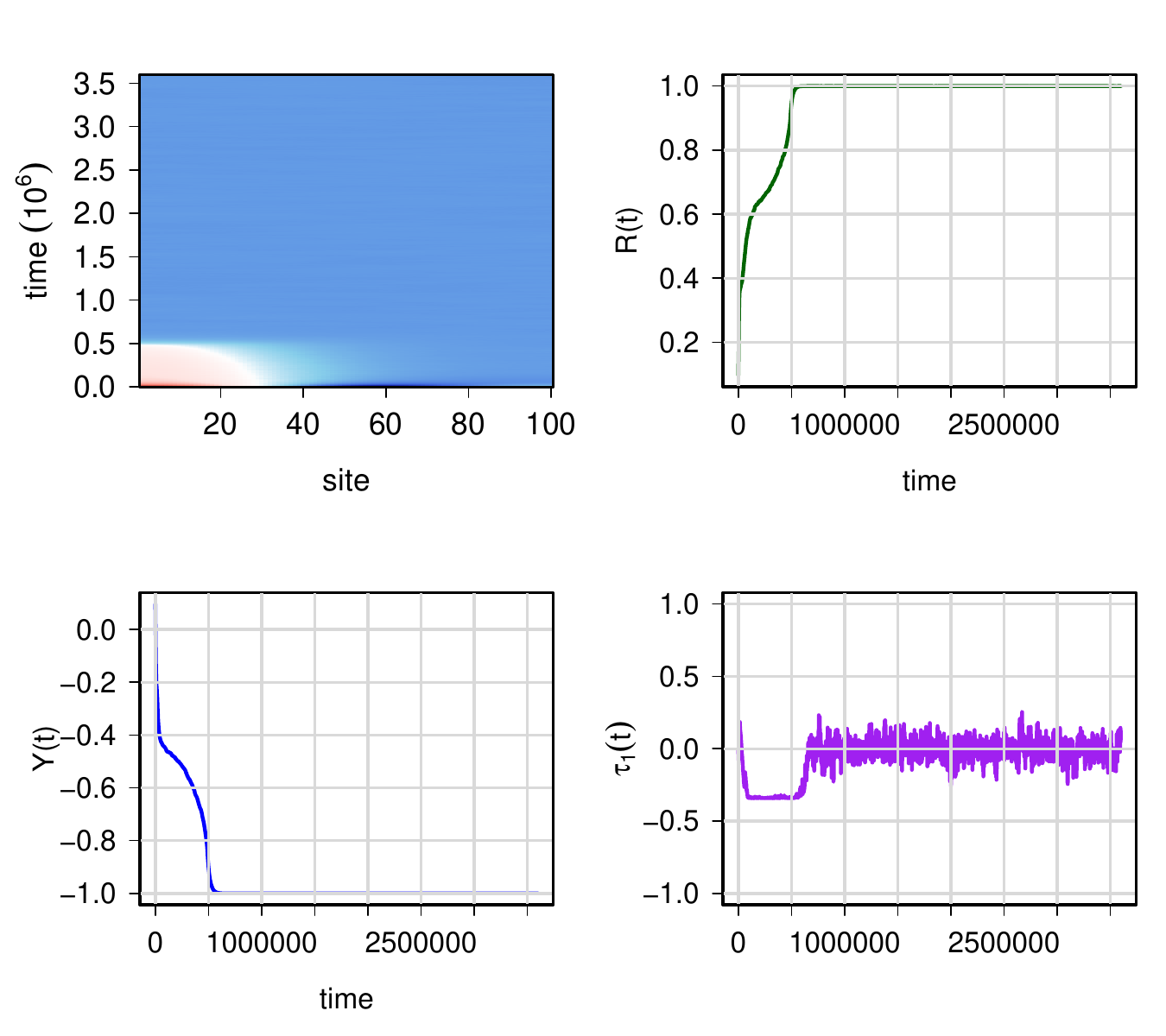}
    \caption*{(b)}
    \label{fig:open_eps0002_b}
\end{subfigure}
}

\vspace{-0.4em}

\caption{
ACCA dynamics with $N=100$ and $\varepsilon=0.002$ under open boundary conditions.
(a) Configuration snapshots at four different times.
(b) Heatmap of the time evolution together with the order parameters
$R(t)$, $Y(t)$ and $\tau_1(t)$.
}
\label{fig:100_open_eps0002}

\vspace{-0.6em}

\end{figure}

We now investigate the generalized dynamics of \texttt{Algorithm~\ref{alg:burst-acca}}
as a function of the burst parameters $k_{\mathrm{mid}}\in\{1,10,20,40\}$ and $k_{\mathrm{noise}}\in\{0,1,10,20,40\}$,
which control the degree of parallel midpoint averaging and the spatial extent of the stochastic perturbation.
Figure~\ref{fig:burst_summary} shows the late-time averages of the observables
$R$, $|Y|$, and $|\tau_1|$ as functions of $(k_{\mathrm{mid}},k_{\mathrm{noise}})$. The two parameters play qualitatively different roles.
Under open boundary conditions, increasing $k_{\mathrm{mid}}$ stabilizes the dynamics,
while increasing $k_{\mathrm{noise}}$ destabilizes it.
This is evident in panel~(a), where larger $k_{\mathrm{mid}}$ leads to higher values of $R$,
whereas larger $k_{\mathrm{noise}}$ suppresses alignment.
Panel~(b) shows that $|Y|$ behaves similarly:
parallel midpoint updates reinforce the selection of one of the two spin--flop branches,
while spatially extended noise weakens this macroscopic orientation. Under periodic boundary conditions the behavior is more subtle.
Panel~(c) shows that the dependence of $|\tau_1|$ on $k_{\mathrm{mid}}$ is generally non-monotone.
Parallel midpoint updates have a twofold effect:
they smooth local fluctuations and can help maintain a coherent twisted profile,
but when applied to many edges simultaneously they also induce more global rearrangements,
which may destabilize a given winding sector.
In contrast, increasing $k_{\mathrm{noise}}$ has a consistently destabilizing effect,
as perturbations applied to many sites erode the topological trapping mechanism.
Overall, the open system exhibits a largely monotone competition between stabilization and noise,
whereas the periodic system shows a more intricate interplay between parallel averaging and winding preservation.

\section{Discussion and future developments}
\label{sec:discussion}

The dynamics of the noisy \texttt{ACCA} model is governed by two distinct mechanisms. 
Under periodic boundary conditions, the winding number induces twisted configurations that confine the evolution near topological sectors, leading to trapping. 
Under open boundary conditions, this constraint is absent, and bi-modal noise drives the system toward two preferred orientations, producing a robust spin--flop regime.

Parallel burst updates clarify the interplay between these effects. 
Increasing the number of midpoint interactions enhances local averaging and stabilizes coherent structures, while increasing noisy updates destabilizes them. 
In open systems, these effects combine monotonically, whereas under periodic boundary conditions their interaction is non-monotone, reflecting the competition between averaging and topological constraints.

To summarize, the dynamics results from the interplay between local averaging, topology, and symmetry-breaking noise, which determines whether the system exhibits consensus, trapping, or persistent switching.

Future work includes quantifying transition times between winding sectors and studying structured or spatially correlated perturbations, bridging toward more realistic models of collective dynamics.
\begin{figure}[!htbp]
\centering

\begin{subfigure}[t]{\textwidth}
    \centering
    \includegraphics[width=1.0\textwidth]{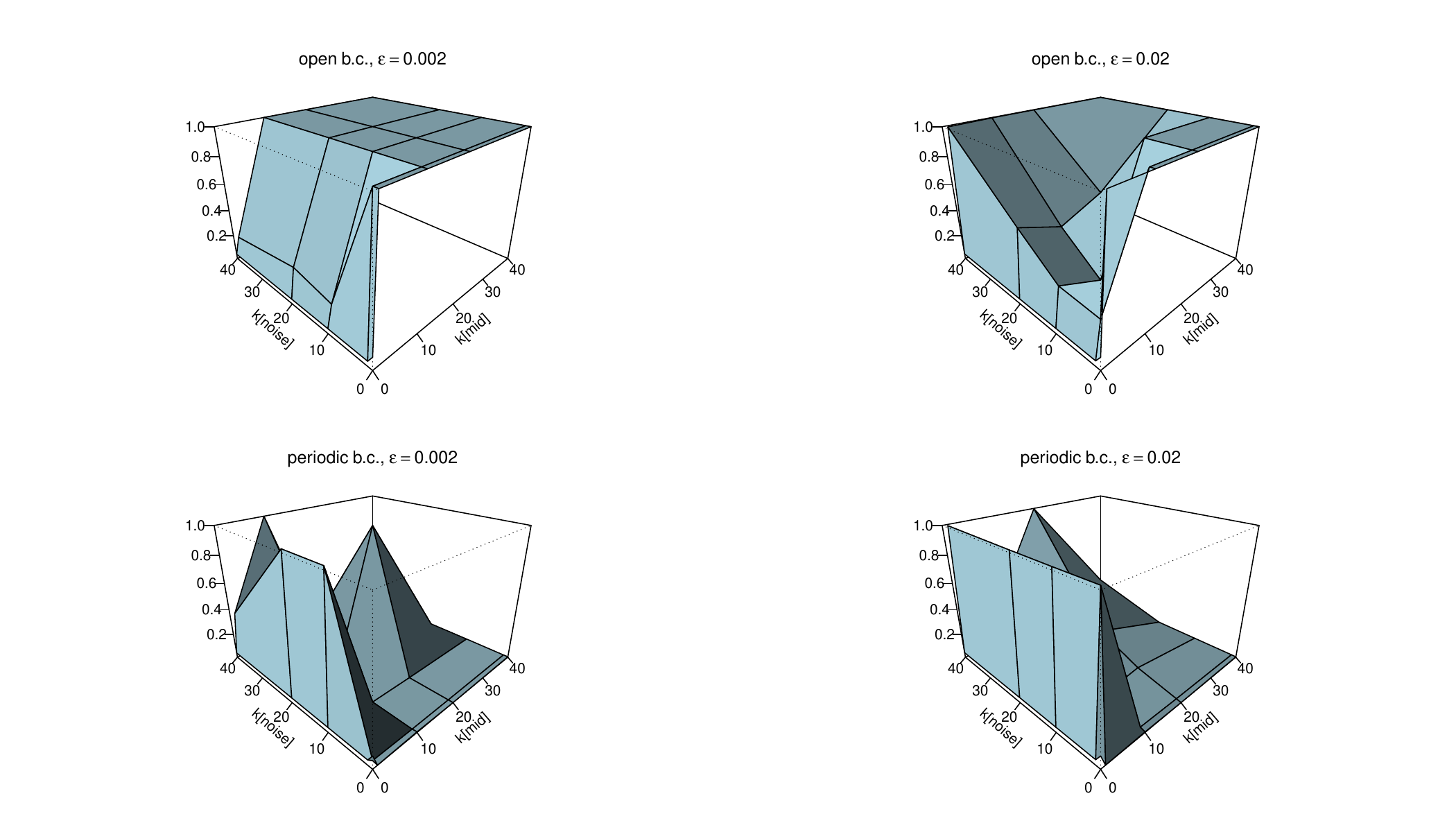}
    \caption{$R$}
\end{subfigure}
\vspace{-1.2em}
\begin{subfigure}[t]{\textwidth}
    \centering
    \includegraphics[width=1.0\textwidth]{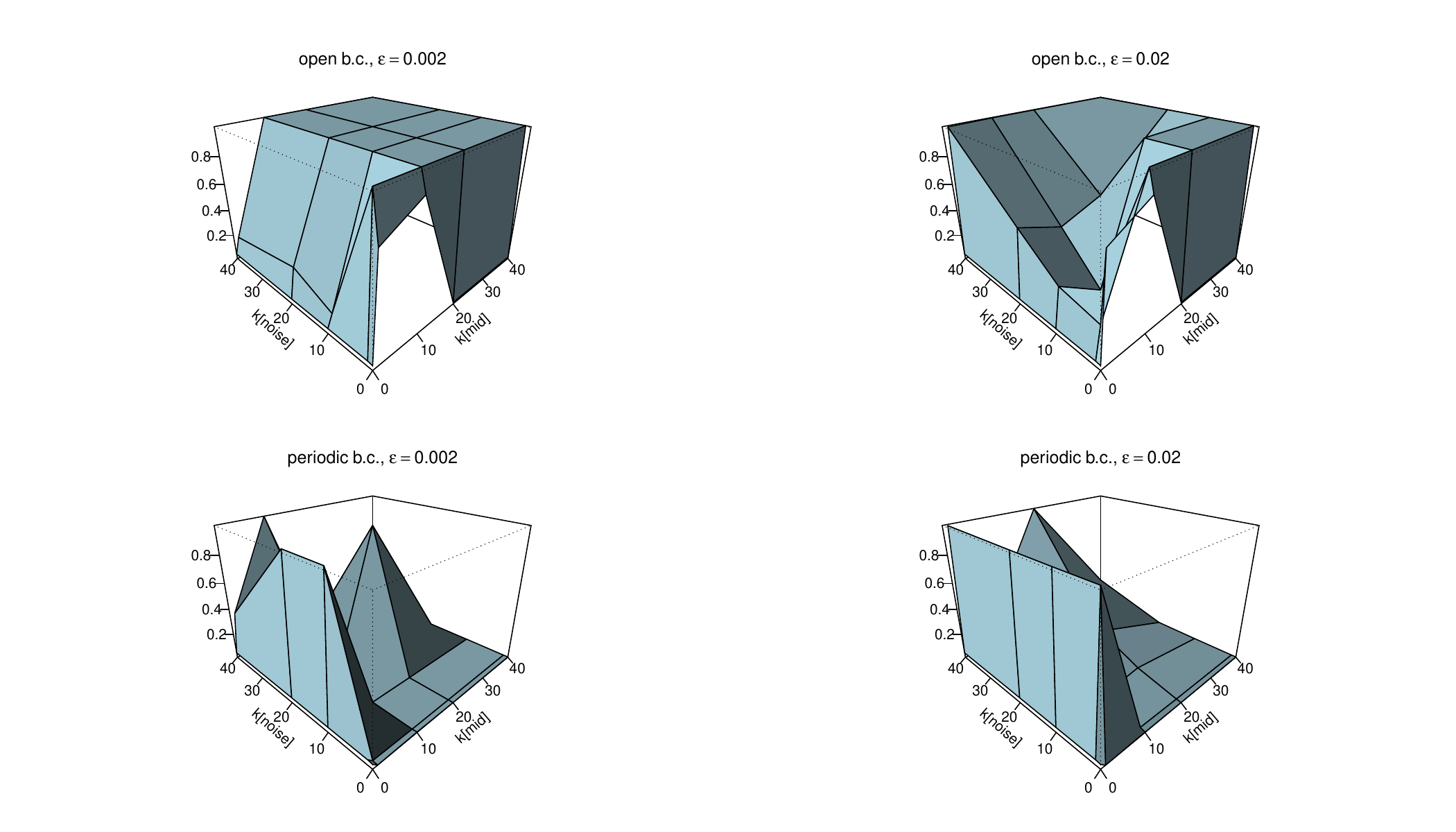}
    \caption{$|Y|$}
\end{subfigure}

\vspace{-1.2em}

\begin{subfigure}[t]{\textwidth}
    \centering
    \includegraphics[width=1.0\textwidth]{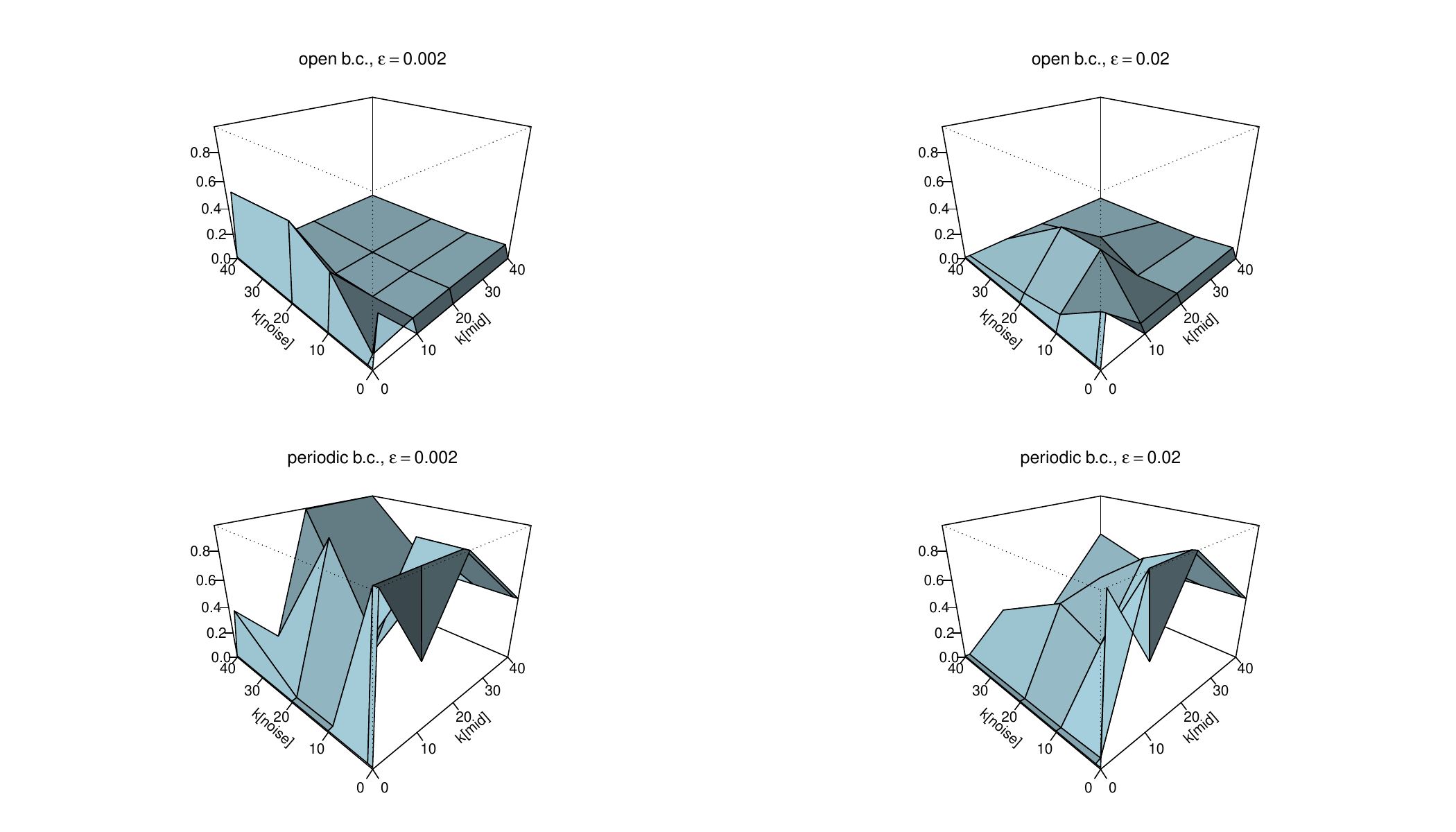}
    \caption{$|\tau_1|$}
\end{subfigure}

\vspace{-1.2em}

\caption{Late-time values of $R$ (panel (a)), $|Y|$ (panel ((b)), and $|\tau_1|$ (panel (c)) for
\texttt{Algorithm~\ref{alg:burst-acca}} as functions of $(k_{\mathrm{mid}},k_{\mathrm{noise}})$,
under open and periodic boundary conditions ($N=100$, $\varepsilon\in\{0.002,0.02\}$). For clarity, the axes correspond to increasing values of $k_{\mathrm{mid}}$ and $k_{\mathrm{noise}}$ starting from the origin.}
\label{fig:burst_summary}
\end{figure}
\FloatBarrier
\bibliographystyle{splncs04}
\bibliography{main}

\end{document}